\theoremstyle{plain}
\newtheorem{thm}{Theorem}
  \theoremstyle{plain}
  \newtheorem{lem}[thm]{Lemma}
  \theoremstyle{plain}
  \newtheorem{cor}[thm]{Corollary}
  \theoremstyle{remark}
  \newtheorem{rem}[thm]{Remark}
  \theoremstyle{remark}
  \theoremstyle{plain}
  \newtheorem{prop}[thm]{Proposition}
  \theoremstyle{definition}
  \theoremstyle{plain}
\DeclareMathAlphabet{\mathsl}{OT1}{cmss}{m}{sl}
\SetMathAlphabet{\mathsl}{bold}{OT1}{cmss}{bx}{sl}
\newcommand{\one}[1]{\mathbbm{1}_{\left\{#1\right\}}}
\renewcommand{\theta}{\vartheta}
\DeclareMathOperator{\R}{\mathbb{R}}
\DeclareMathOperator{\Z}{\mathbb{Z}}
\DeclareMathOperator{\N}{\mathbb{N}}
\DeclareMathOperator{\De}{d}
\newcommand{\e}{\mathrm{e}}
\newcommand{\cO}{\mathcal{O}}
\newcommand{\cH}{\mathcal{H}}
\numberwithin{equation}{section}
\newcommand{\eq}[1]{\begin{equation#1}}
\newcommand{\eeq}[1]{\end{equation#1}}
\global\long\def\prob{\mathsf{P}}
\global\long\def\ER{\mathbb{E}}  
\date{}
\begin{document}

\title{A note on the Green's function for the transient random walk without killing on the half lattice, orthant and strip}
\author{Alberto Chiarini \thanks{Universit\'e d'Aix-Marseille, Marseille, France\hfill  \texttt{alberto.chiarini@univ-amu.fr}}\qquad Alessandra Cipriani    \thanks{Weierstrass Institute, Berlin, Germany\hfill  \texttt{Alessandra.Cipriani@wias-berlin.de}}}

\maketitle
\begin{abstract}In this note we derive an exact formula for the Green's function of the random walk on different subspaces of the discrete lattice (orthants, including the half space, and  the strip) without killing on the boundary in terms of the Green's function of the simple random walk on $\Z^d$, $d\ge 3$.
\end{abstract}
\section{Introduction}
The literature encompassing random walks on subgraphs of the square lattice is very rich, spanning not only probability theory, but also combinatorics, queueing theory, and algebraic geometry (\cite{bostan,BMS,DenisovWachtel,FIMM, KurMal, Raschel,Uchi10} to mention only a few). In this short note we focus on one particular observable of the random walk, the Green's function, which measures the local time of the walk \cite[Chapter 4]{LawlerLimic}. In this short note we answer the natural question of whether this quantity is directly related to the Green's function $g(\cdot,\,\cdot)$ of the simple random walk on the whole lattice $\Z^d$. We will be concerned with the transient case, that is, when $g$ is finite, although our formulas can be derived in the recurrent setting adding an extra killing to the walk. To the best of the authors' knowledge, explicit formulas for the Green's function were obtained only in the case when a killing is imposed on the boundary of the graph, for example on the axes (\citet[Chapter 8]{LawlerLimic}) or for walks with Neumann and reflected boundary conditions (\cite{GanPer} study for example the scaling limit of reflected random walks in a planar domain). We obtain a closed formula
for the Green's function in any subspace which is the intersection of $m$ hyperplanes, $m\leq d$ in $d\ge 3$, and for the strip of fixed width in $d\ge 4$. Using a simple ``folding'' technique, we fold $\Z^d$ onto each of these subgraphs, and by electric networks reduction we deduce a representation formula exclusively in terms of $g$, which enables also to approximate numerically the Green's function in each of these subgraphs by means of Bessel functions.
\paragraph{Structure of the paper}After introducing some notation in Section~\ref{sec:notation}, we give the explicit formulas for the Green's function of the half space in Section~\ref{sec:half_space_prel}, the strip in Section~\ref{sec:strip}, and of the orthant in Section~\ref{sec:orthant_prel}.

\section{General setup}\label{sec:notation}

Let $\mathcal{G}=(V,E)$ be a connected graph of bounded degree with vertex set $V$ and edge set $E$. We will write $x\sim y$ if $\{x,y\}\in E$. We endow each edge $\{x,y\}\in E$ with a positive and finite conductance $c_\mathcal{G}(x,y)$ and for each $x\in V$ we write $\pi_\mathcal{G}(x):= \sum_{y\sim x} c_\mathcal{G}(x,y)$.

Let $(S_n)_{n\in \N_{0}}$ be a discrete time random walk on $\mathcal{G}$ with transition probability
\[
\prob(S_{n+1}=y|S_n=x) = \frac{c_\mathcal{G}(x,y)}{\pi_\mathcal{G}(x)}.
\]
Then $S_n$ is a reversible, irreducible Markov chain on $\mathcal{G}$ with stationary measure given by $\pi_\mathcal{G}$. If the random walk is transient, then we can define the Green's function
\begin{equation}\label{eq:green}
  G_\mathcal{G}(x,y) = \frac{1}{\pi_{\mathcal{G}}(y)} \ER_x \left[\sum_{m\geq0}\one{S_{m}=y}\right],\quad x,\,y\in V,
\end{equation}
where $\ER_x$ is the expectation with respect to the random walk $(S_n)_{n\in \N_{0}}$ started at $x\in V$.
It is easy to see that  $G_\mathcal{G}(x,y) = G_\mathcal{G}(y,x)$, being the walk $S_n$ reversible with respect to $\pi_{\mathcal{G}}$.

We will adopt a special notation when the graph has vertex set $\Z^d$, edge set $\{\{x,y\}: \|x-y\|=1\}$ and unitary conductances. In this case we are just looking at the classical simple random on $\Z^d$ which is transient for $d\geq 3$.  We will denote its Green's function simply by $g(x,y)$, $x,y\in \Z^d$. Notice that using \eqref{eq:green} $g(x,y)$ differs for a normalization constant of value $2d$ from the more classical definition $\widetilde{g}(x,y):= \ER_x[\sum_{m\geq0}\one{S_{m}=y}]$.

\paragraph*{Notation.}
Let  $\left(\mathbf{e}_{(i)}\right)_{i=1,\,\ldots,\,d}$
denote the canonical basis of $\R^{d}$. For a vector $v\in\R^{d}$
we use also the notation $v=\left(v_i\right)_{i=1}^{d}$ to specify its components and for a vector-valued process $X$ we specify its components with $(X_n)_{n\ge0}=(X_n^{(1)},\,\ldots,\,X_n^{(d)})_{n\ge 0}$ .
We denote $\N = \{1,2,\dots\}$ and $\N_{0} = \{0\}\cup \N$.

\section{Green's function on the half lattice\label{sec:half_space_prel}}

The half lattice $\cH$  is the graph with vertex set $H:=\{x\in\Z^{d}:x_{1}\geq0\}$
and edge set $E:=\{\{x,y\} :\|x-y\|=1,\,x,\,y\in H\}$. We set all the conductances equal to one, so that $\pi_\cH (x) = \mathrm{deg}(x)$. The Green's function of the simple random walk $(S_{m})_{m\ge0}$
on $\cH = (H,E)$ is simply given, by means of \eqref{eq:green}, by
\[
G_{\cH}(x,\,y):=\frac{1}{\pi_{\cH}(y)}\ER_{x}\left[\sum_{m\geq0}\one{S_{m}=y}\right],\quad x,y\in H.
\]
In the case in which one considers a random walk on $\cH$ with killing on $\{0\}\times \Z^{d-1}$, the Green's function has the form
$$g(x,y)-g(x,\overline{y})\quad x,\,y\in H,$$
where $\overline{\cdot}$ is the map which takes $y=\left(y_1,\,y_2,\,\ldots,\,y_d\right)\in \Z^d$ to $\overline y:=(-y_1,\,y_2,\,\ldots,\,y_d)$ (see \citet[Proposition 8.1.1]{LawlerLimic}). We compare this formula with our result that considers the case without killing.
\begin{prop}[Green's function on the half-space]
\label{lem:half_green}We have, for all $x,\,y\in H,$
that
\begin{equation}\label{eq:green_halfspace}
G_{\cH}(x,\,y)=g(x,\,y)+g\left(x,\,\overline{y}-\mathbf e_{(1)}\right).
\end{equation}
\end{prop}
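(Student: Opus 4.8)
The plan is to use the \emph{folding} technique advertised in the introduction, realizing the random walk on $\cH$ as the image under the folding map of the simple random walk on all of $\Z^d$. Concretely, let $\phi:\Z^d\to H$ denote the map that reflects the first coordinate, $\phi(z)=(|z_1|,z_2,\dots,z_d)$, and let $(S_m)_{m\ge0}$ be the simple random walk on $\Z^d$ started at $x\in H$. I would first verify that $(\phi(S_m))_{m\ge0}$ is itself a Markov chain on $H$ whose transition probabilities coincide with those of the walk on $\cH$. Away from the boundary hyperplane $\{z_1=0\}$ this is immediate, since $\phi$ is a local isometry there. The only point requiring care is the behaviour at the boundary: a site $y\in H$ with $y_1=0$ has degree $2d$ in $\cH$, whereas a vertex $z$ with $z_1=0$ has degree $2d$ in $\Z^d$ but its two neighbours $\pm\mathbf e_{(1)}$ both fold onto the single interior neighbour $\mathbf e_{(1)}$. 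I expect this mismatch of conductances at the boundary to be the main obstacle, and resolving it correctly is what produces the extra term in \eqref{eq:green_halfspace}.

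The second step is to push the occupation-time expectation in \eqref{eq:green} through the folding map. For a target $y\in H$, the event $\{\phi(S_m)=y\}$ decomposes as $\{S_m=y\}\cup\{S_m=\overline y\}$ when $y_1>0$ (the two preimages of $y$), and as the single event $\{S_m=y\}$ when $y_1=0$ (where $y=\overline y$). Summing over $m$ and taking $\ER_x$, the local time of the folded walk at $y$ equals the sum of the local times of the $\Z^d$-walk at $y$ and at $\overline y$. Using the unnormalized Green's function $\widetilde g$, this reads
\[
\ER_x\Bigl[\sum_{m\ge0}\one{\phi(S_m)=y}\Bigr]=\widetilde g(x,y)+\widetilde g(x,\overline y)\qquad(y_1>0),
\]
with the two terms collapsing into one when $y_1=0$. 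It remains to pass from $\widetilde g$ to the normalized $g$ and to the Green's function $G_{\cH}$ of the genuine half-space walk.

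The final and most delicate step is the normalization bookkeeping that accounts for the boundary degree mismatch, and this is where the $-\mathbf e_{(1)}$ shift in \eqref{eq:green_halfspace} must emerge. Since $g$ carries the factor $2d$ relative to $\widetilde g$ and $\pi_{\cH}(y)=\deg_{\cH}(y)$ differs from $2d$ exactly on the boundary, I would track these normalizations separately for $y_1>0$ and $y_1=0$. For interior $y$ the identity $G_{\cH}(x,y)=g(x,y)+g(x,\overline y)$ should drop out directly; the claimed formula then follows upon observing that $\overline y-\mathbf e_{(1)}=(-y_1-1,y_2,\dots,y_d)$ reduces to $\overline y$ precisely when one rewrites the reflected contribution so that it is valid uniformly, including on the boundary $y_1=0$ where $\overline y-\mathbf e_{(1)}=-\mathbf e_{(1)}$ sits just outside $H$. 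I expect the cleanest route is to check the boundary case $y_1=0$ as a separate computation — reconciling the halved local time there with the shifted argument — and I anticipate verifying the harmonicity/equality at the boundary to be the step most likely to hide an off-by-one or factor-of-two error, so I would double-check it against the known killed-walk formula $g(x,y)-g(x,\overline y)$ quoted just above the statement.
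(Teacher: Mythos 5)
There is a genuine gap, and in fact the folding you chose is the wrong one. Your map $\phi(z)=(|z_1|,z_2,\dots,z_d)$ reflects about the hyperplane $\{z_1=0\}$, which has fixed points in $\Z^d$; as you yourself observe, the two neighbours $z\pm\mathbf e_{(1)}$ of a boundary site both fold onto $z+\mathbf e_{(1)}$, so $\phi(S)$ jumps from a boundary site to its interior neighbour with probability $\nicefrac{1}{d}$ rather than $\nicefrac{1}{2d-1}$. Hence $\phi(S)$ is \emph{not} the simple random walk on $\cH$: it is the walk with conductances $\nicefrac{1}{2}$ on the edges lying inside $\{x_1=0\}$, whose Green's function is $g(x,y)+g(x,\overline y)$ (this is exactly the second Remark following the Proposition). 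Your intermediate claim that ``for interior $y$ the identity $G_{\cH}(x,y)=g(x,y)+g(x,\overline y)$ should drop out directly'' therefore contradicts the statement you are proving: for interior $y$ one has $g(x,\overline y)\neq g(x,\overline y-\mathbf e_{(1)})$ in general, and no normalization or boundary bookkeeping can convert one into the other, since the two formulas already disagree away from the boundary where all degrees equal $2d$. The step where you expect the $-\mathbf e_{(1)}$ shift to ``emerge'' is precisely the step you never carry out, and it cannot be carried out within your setup.

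The fix — and the paper's actual argument — is to reflect about $\{x_1=-\nicefrac{1}{2}\}$ instead, which is fixed-point free on $\Z^d$ and sends $y$ to $\overline y-\mathbf e_{(1)}=(-y_1-1,y_2,\dots,y_d)$, so that \emph{every} $y\in H$, boundary included, has exactly two preimages and no case distinction is needed. To make the conductances match one must first introduce the midpoints $z-\nicefrac{1}{2}\,\mathbf e_{(1)}$, $z\in\{0\}\times\Z^{d-1}$: split each edge of $\Z^d$ crossing the hyperplane into two series edges of conductance $2$ (leaving $g$ unchanged by network reduction), and attach to $\cH$ the corresponding ``comb teeth'' of conductance $2$ (through which no current flows, leaving $G_\cH$ unchanged). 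After these modifications the stationary measures agree on $H$ and the folded walk has exactly the law of the walk on the modified half lattice, whence $G_{\cH}(x,y)=g(x,y)+g(x,\overline y-\mathbf e_{(1)})$. Your overall strategy (fold, decompose the occupation time over preimages) is the right one, but without the half-integer construction the decomposition you wrote down proves a different proposition.
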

\begin{proof} We will work in several steps by reducing our problem
from considering a random walk on the half space to one on $\Z^{d}$. The idea is basically to fold $\Z^d$ on itself along the line $\{x:\,x_1=-\nicefrac{1}{2}\}$ to obtain a graph which looks like the half lattice plus some additional lateral ``combteeth'', and thus obtain a half space with reflection across the vertical axis. We will explain this now in mathematical terms.
\begin{figure}
\begin{center}
\includegraphics[width=0.4\textwidth]{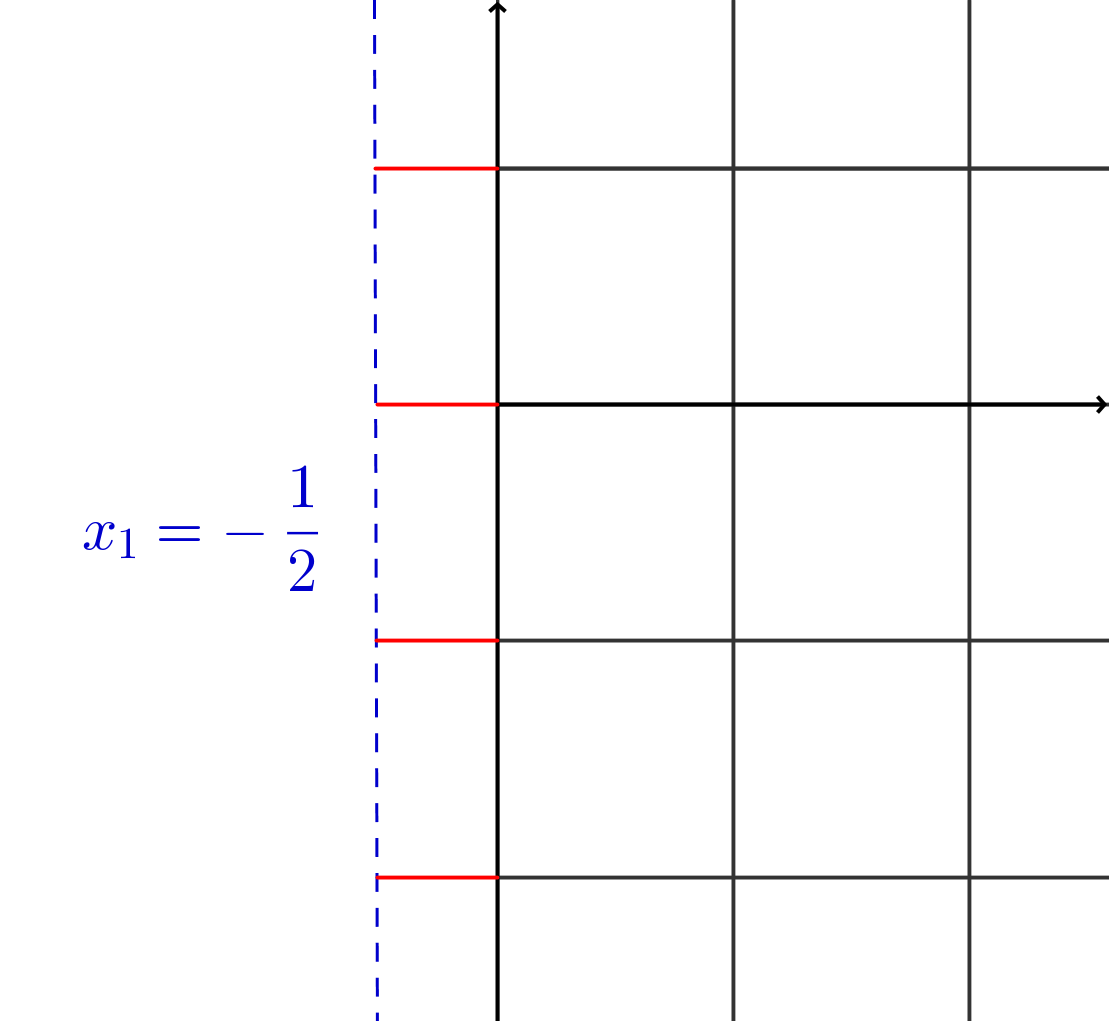}
\end{center}
\caption{A portion of $\mathcal{H}'$ with in red the conductances with value $2$. }
\label{fig:H'}
\end{figure}

Let us begin by adding to $\cH$ all the bonds $\{z,z-\nicefrac{1}{2}\,\mathbf{e}_{(1)}\}$ for all $z\in\{0\}\times\Z^{d-1}$. Call this new graph $\cH'$.
Let us put for each edge a conductance
\[
c_{\cH'}(x,\,y):=\begin{cases}
2 & \left\Vert x-y\right\Vert =\nicefrac{1}{2}\\
1 & \mathrm{otherwise}
\end{cases}.
\]
(see Figure \ref{fig:H'} for a two-dimensional example). It is easy to see that $G_{\cH'}(x,y) = G_\cH(x,y)$ for all $x,y\in H$ as there is no current flowing through the new bonds and the old ones are unchanged. Also denote by $(L_n)_{n\geq 0}$ the random walk on $\cH'$ with transition probabilities given by $p_{xy}:= c_{\cH'}(x,y)/\pi_{\cH'}(x)$.

Consider further the graph obtained from $\Z^{d}$ by splitting the conductance on the bond $\{z-\mathbf{e}_{(1)},z\}$ with $z\in \{0\}\times \Z^{d-1}$ into two conductances in series on the bonds $\{z- \mathbf{e}_{(1)}, z-\nicefrac{1}{2}\,\mathbf{e}_{(1)}\}$ and $\{z-\nicefrac{1}{2}\, \mathbf{e}_{(1)},z\}$. More precisely on this new graph, which we call $\mathcal{Q}$,  put the following conductances:
\[
c_\mathcal{Q}(x,\,y):=\begin{cases}
2 & \left\Vert x-y\right\Vert =\nicefrac{1}{2}\\
1 & \mathrm{otherwise}
\end{cases},\quad x,\,y\in \Z^d\cup \left( \{-\nicefrac{1}{2}\}\times \Z^{d-1}\right).
\]
By Ohm's law of conductances in series, this ensures that the new graph obtained is equivalent to $\Z^{d}$. More precisely $g(x,y) = G_\mathcal{Q}(x,y)$ for all $x, y\in \Z^d$.
\begin{figure}
\begin{center}\includegraphics[width=0.4\textwidth]{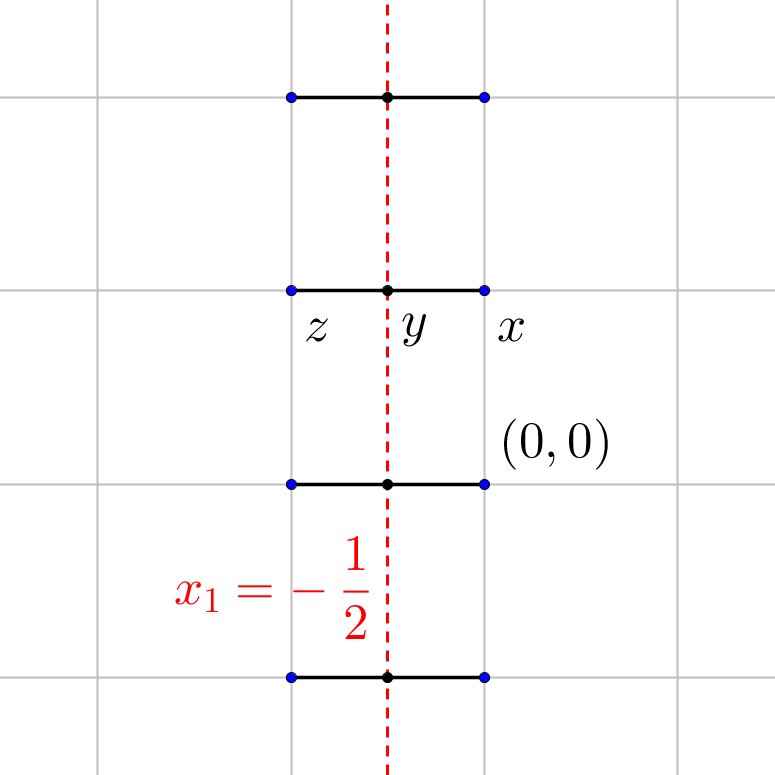}
\end{center}
\caption{A portion of $\mathcal{Q}.$ Starting from $\Z^{d}$ (light gray
lines), we split the conductance $\{x,z\}$ which has value one in the two conductances $\{x,\,y\}$ and $\{y,\,z\}$ with value two.}
\label{fig:Q}
\end{figure}

 Consider the simple random walk $W=(W_{n})_{n\geq0}$ on $\mathcal{Q}$ with transition probabilities given by  $q_{xy} :=c_{\mathcal{Q}}(x,\,y)/\pi_\mathcal{Q}(x)$ and started in $H$. Write  $W_n=\left(U_{n},\,V_{n}\right)$ with $U_{n}$ being the projection
of $W_n$ on the first coordinate direction and $V_{n}$
the projection of $W_n$ on the remaining $d-1$ components.
Finally, consider $W'=(W'_{n})_{n\ge0}$ which is the reflection of
$W$ with respect to the hyperplane $\left\{ x\in \R^d :\,x_{1}=-\nicefrac{1}{2}\right\}$.
In other words, $W'_{0}=W_0$ and $W'_{n}=\left(-U_{n}-1,\,V_{n}\right)\one{U_{n}\le-\nicefrac{1}{2}}+(U_{n},\,V_{n})\one{U_{n}>-\nicefrac{1}{2}}$.

As we have already mentioned, by electric network reduction \citep[Section 2.3]{LyonsPeres}, we are able to say that $G_{\mathcal{Q}}(x,\,y)=g(x,\,y)$ for all $x,\,y $ and $G_{\cH}(x,\,y)=G_{\cH'}(x,\,y)$ for all $x,y\in H$. Moreover by construction $\pi_{\cH'} \equiv \pi_{\mathcal{Q}}$ on $H$ and by checking the first step transition probabilities it is easy to notice that $W'\stackrel{d}{=}\stackrel{}{L}$. Therefore,
for all $x,\,y\in H$  it holds that
\begin{align*}
G_{\cH}(x,\,y)&=  G_{\cH'}(x,\,y) =\frac{1}{\pi_{\cH'}(y)}\ER_{x}\left[\sum_{n\ge0}\one{L_{n}=y}\right]=\frac{1}{\pi_\mathcal{Q}(y)}\ER_{x}\left[\sum_{n\ge0}\one{W_{n}'=y}\right]\\
            &=\frac{1}{\pi_\mathcal{Q}(y)}\ER_{x}\left[\sum_{n\ge0}\one{W_{n}=y}\right]
            +\frac{1}{\pi_\mathcal{Q}(y)}\ER_{x}\left[\sum_{n\ge0}\one{W_{n}=\overline{y}-\mathbf e_{(1)}}\right]\\
& = G_{\mathcal{Q}}\left(x,\,y\right)+G_{\mathcal{Q}}\left(x,\,\overline{y}-\mathbf e_{(1)}\right)
\end{align*}
where the second equality uses that $W'\stackrel{d}{=}L.$ The conclusion follows immediately after using that $G_{\mathcal{Q}}(x,\,y)=g(x,\,y)$ for all $x,\,y\in \Z^d$.
\end{proof}

\begin{rem} Let $N\in\N$ and consider the set $C_N := ([0,N]\times [-N,N]^{d-1})\cap H$.
Let
\[
G_{C_N}(x,y):=\frac{1}{\pi_\cH(y)}\ER_x\left[ \sum_{m=0}^{\tau_{C_N}}\one{S_{m}=y}\right],\quad x,\,y\in H,
\]
where $\tau_{C_N}:=\inf\{m\ge 0:\,S_m\notin C_N\}.$ In fact we are looking at the Green's function of a random walk on $H$ which is killed when leaving $C_N$.
Then by the arguments of Proposition~\ref{lem:half_green} one can guess that
\eq{}\label{eq:guess}
G_{C_N}(x,y):= g_{K_N}(x,\,y) + g_{K_N}(x,\,\overline{y}-\mathbf{e}_{(1)})
\eeq{}
where $K_N:=([-N-1,N]\times [-N,N]^{d-1})\cap \Z^d= C_N\cup (\overline{C_N}-\mathbf{e}_{(1)})$ and $g_{K_N}$ is the Green's function of the simple random walk on $\Z^d$ which is killed when leaving $K_N$. Having this guess it is straightforward to verify that this is the right choice since $G_{C_N}(\cdot,\,y)$, $y\in H$, is the unique solution to
\[
\begin{cases}
\sum_{z\sim x,\, z\in H} c_{\cH}(z,x) (G_{C_N}(z,\,y)-G_{C_N}(x,\,y) )=-\delta_x(z),&x\in C_N,\\
G_{C_N}(x,\,y)=0,&x\notin C_N
\end{cases}
\]
\cite[Proposition~6.2.2]{LawlerLimic}.
Notice finally that sending $N\to+\infty$ we get back \eqref{eq:green_halfspace} as $g_{K_N}(\cdot,\,\cdot)\to g(\cdot,\cdot)$. This approach offers a concise alternative to prove Proposition~\ref{lem:half_green}, but is of course based on the ``educated guess'' \eqref{eq:guess}.
\end{rem}
\begin{rem}Another natural case which is worth comparing with \eqref{eq:green_halfspace} is the Green's function of the process $(S_n)_{n\ge 0} = (|S^{(1)}_n|,\,S^{(2)}_n,\dots, \,S^{(d)}_n)_{n\ge 0}$, where $(S^{(1)}_n,\,S^{(2)}_n,\dots, \,S^{(d)}_n)_{n\ge 0}$ is the simple random walk on $\Z^d$. It is easy to see that $S_n$ has the same law of a random walk on $H$ with conductances $c(x,y)$ equal to \nicefrac{1}{2} if $x_1 = y_1 = 0$ and equal to one otherwise. Its Green's function equals
\[
g(x,y)+ g(x,\overline{y}),\quad x,y\in H.
\]
\end{rem}

\begin{rem} The Green's function $G_{\cH}$ is not translation invariant and the maximum of $G_{\cH}(x,x)$ is on the hyperplane $\{x\in \Z^d:\,x_1 = 0\}$. More precisely it follows from \eqref{eq:green_halfspace} that
\eq{}\label{rem:sup_G_CH}
g(0,\,0)=\inf_{x\in H}G_{\cH}(x,\,x)<\sup_{x\in H}G_{\cH}(x,\, x)=G_{\cH}(0,\,0),
\eeq{}
and that $\lim_{x_{1}\to+\infty}G_{\cH}(x,x)=g(0,0)$. Notice that even though we could have proven that $\sup_{x\in H}G_{\cH}(x,\, x)=G_{\cH}(0,\,0)$ with Rayleigh's monotonicity law, we could not employ such a technique to obtain the strict inequality \eqref{rem:sup_G_CH}.
\end{rem}
\section{Green's function for the strip}\label{sec:strip}
The same idea of folding $\Z^d$ on itself allows us to obtain a closed formula for the strip $\mathcal{S}_L:=[0,\,L-1]\times \Z^{d-1}$ for $L\in \{2,\,3,\,\ldots\}$, $d\ge 4$ and nearest-neighbour bonds. The conductances are set to be $c_{\mathcal{S}_L}\equiv 1$ for all the bonds.
\begin{prop}
With the above notation one has
\begin{equation}\label{eq:green_strip}
  G_{\mathcal{S}_L}(x,\,y)=\sum_{k=-\infty}^{+\infty}\left[g(x,\,(kL+L-1)\mathbf e_{(1)}+\overline{y})\one{k\in 2\N+1}+g(x,\,kL\mathbf e_{(1)}+y)\one{k\in 2\N}\right].
\end{equation}
\end{prop}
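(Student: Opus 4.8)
The plan is to reproduce the folding construction of Proposition~\ref{lem:half_green}, the only difference being that the first coordinate axis $\Z$ is now folded onto $\{0,1,\dots,L-1\}$ by the infinite dihedral group $\Gamma$ generated by the two reflections $r_0\colon t\mapsto -1-t$ and $r_L\colon t\mapsto 2L-1-t$ across the hyperplanes $\{x_1=-\nicefrac{1}{2}\}$ and $\{x_1=L-\nicefrac{1}{2}\}$. Since $r_L\circ r_0$ is the translation $t\mapsto t+2L$, the group $\Gamma$ acts on $\Z$ with fundamental domain $\{0,\dots,L-1\}$, and its reflection hyperplanes sit exactly at $\{x_1=-\nicefrac{1}{2}+mL\}$, $m\in\Z$. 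Writing $\phi\colon\Z^d\to\mathcal S_L$ for the induced folding map --- the accordion folding on the first coordinate, the identity on the remaining $d-1$ --- the fibre over $y\in\mathcal S_L$ is the $\Gamma$-orbit of $y_1$ carried along the transverse coordinates,
\[
\phi^{-1}(y)=\bigl\{\,kL\,\mathbf e_{(1)}+y:\ k\ \text{even}\,\bigr\}\cup\bigl\{\,(kL+L-1)\,\mathbf e_{(1)}+\overline y:\ k\ \text{odd}\,\bigr\},
\]
which is precisely the index set in~\eqref{eq:green_strip}.

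I would then introduce the two auxiliary networks exactly as before. First, enrich $\mathcal S_L$ to $\mathcal S_L'$ by attaching dangling ``combteeth'' $\{z,z-\nicefrac{1}{2}\,\mathbf e_{(1)}\}$ along the left face $\{0\}\times\Z^{d-1}$ and $\{z,z+\nicefrac{1}{2}\,\mathbf e_{(1)}\}$ along the right face $\{L-1\}\times\Z^{d-1}$, each of conductance $2$, all remaining conductances being $1$; since no current flows through the dangling bonds, $G_{\mathcal S_L'}(x,y)=G_{\mathcal S_L}(x,y)$ for $x,y\in\mathcal S_L$. Second, let $\mathcal Q$ be the graph obtained from $\Z^d$ by splitting, for every $m\in\Z$, the bond crossing $\{x_1=-\nicefrac{1}{2}+mL\}$ into two bonds of conductance $2$ in series; by Ohm's law each such replacement is electrically transparent \citep[Section~2.3]{LyonsPeres}, so $G_{\mathcal Q}(x,y)=g(x,y)$ for all $x,y\in\Z^d$. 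By construction $\mathcal Q$ is invariant under $\Gamma$, hence $\pi_{\mathcal Q}$ is $\Gamma$-invariant and $\pi_{\mathcal Q}\equiv\pi_{\mathcal S_L'}$ on $\mathcal S_L$; in particular $\pi_{\mathcal Q}(y')=\pi_{\mathcal Q}(y)$ for every $y'\in\phi^{-1}(y)$.

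Letting $W=(W_n)_{n\ge0}$ be the walk on $\mathcal Q$ started in $\mathcal S_L$ and $W'=\phi(W)$ its folding, I would check the one-step transition probabilities at each layer $\{x_1=-\nicefrac{1}{2}+mL\}$ --- as in Proposition~\ref{lem:half_green}, the two conductance-$2$ edges leaving a midpoint fold onto a single comb-tooth edge --- to conclude that $W'\stackrel{d}{=}M$, where $M$ denotes the walk on $\mathcal S_L'$. Using $\{W'_n=y\}=\{W_n\in\phi^{-1}(y)\}$ together with the $\Gamma$-invariance of $\pi_{\mathcal Q}$, one then computes for $x,y\in\mathcal S_L$
\begin{align*}
G_{\mathcal S_L}(x,y)=G_{\mathcal S_L'}(x,y)
&=\frac{1}{\pi_{\mathcal S_L'}(y)}\ER_x\Bigl[\sum_{n\ge0}\one{M_n=y}\Bigr]
=\frac{1}{\pi_{\mathcal Q}(y)}\ER_x\Bigl[\sum_{n\ge0}\one{W_n\in\phi^{-1}(y)}\Bigr]\\
&=\sum_{y'\in\phi^{-1}(y)}\frac{1}{\pi_{\mathcal Q}(y')}\ER_x\Bigl[\sum_{n\ge0}\one{W_n=y'}\Bigr]
=\sum_{y'\in\phi^{-1}(y)}g(x,y'),
\end{align*}
and substituting the explicit fibre yields~\eqref{eq:green_strip}.

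I expect the main obstacle to be the distributional identity $W'\stackrel{d}{=}M$. In contrast with the single fold of the half-space, the strip forces a simultaneous reflection across the infinitely many equally spaced hyperplanes $\{x_1=-\nicefrac{1}{2}+mL\}$, and one must verify carefully that the accordion folding intertwines the transition kernel of $W$ with that of $M$ at every boundary layer, using that $\mathcal Q$ and its conductances are genuinely $\Gamma$-invariant. The second point requiring attention is convergence: the exchange of $\ER_x$ with the sum over $\phi^{-1}(y)$, as well as the finiteness of the right-hand side of~\eqref{eq:green_strip}, rest on the transience of the walk on $\mathcal S_L$. As the transverse motion is a simple random walk on $\Z^{d-1}$ and $g(x,z)$ decays like $|z|^{2-d}$ along the first axis, the series behaves like $\sum_k|k|^{2-d}$ and converges precisely when $d-2>1$, i.e.\ for $d\ge4$, which is exactly the standing assumption.
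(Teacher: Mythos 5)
Your proposal is correct and follows essentially the same route as the paper, which only sketches this proof by reference to the half-space argument: split the conductances crossing each hyperplane $\{x_1=kL-\nicefrac{1}{2}\}$ into two in series, fold $\Z^d$ accordion-style onto the strip, and identify $G_{\mathcal S_L}(x,y)$ with the sum of $g(x,\cdot)$ over the fibre of $y$, which is exactly your orbit $\{kL\,\mathbf e_{(1)}+y:k\text{ even}\}\cup\{(kL+L-1)\mathbf e_{(1)}+\overline y:k\text{ odd}\}$. Your additional care about the $\Gamma$-invariance of $\pi_{\mathcal Q}$, the combteeth on both faces, and the convergence of $\sum_k|k|^{2-d}$ for $d\ge4$ supplies precisely the details the paper leaves implicit.
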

\begin{proof}
The idea is to apply a so-called "mountain-and-valley" fold to $\Z^d$. We are splitting each of the conductances  connecting the points in $L\Z\times \Z^{d-1}$ and $L\Z\times \Z^{d-1} - \mathbf{e}_{(1)}$, which have value one, into two conductances in series with value two, then we fold $\Z^d$ along the lines $\{x_1 = k L -\nicefrac{1}{2}\}$, $k\in \Z$, as described in Figure~\ref{fig:strip}. This operation will translate a point $A_0\in \mathcal{S}_L$ into a family of points $\left\{A_k\right\}_{k\in \Z}$, where
\[
A_k:=\begin{cases}
(kL+L-1)\mathbf e_{(1)}+\overline{A_0}&k\in 2\N+1\\
kL\mathbf e_{(1)}+A_0&k\in 2\N
\end{cases}.
\]
\begin{center}
\begin{figure}[ht!]
\makebox[\textwidth][c]{\includegraphics[height=0.35\textheight]{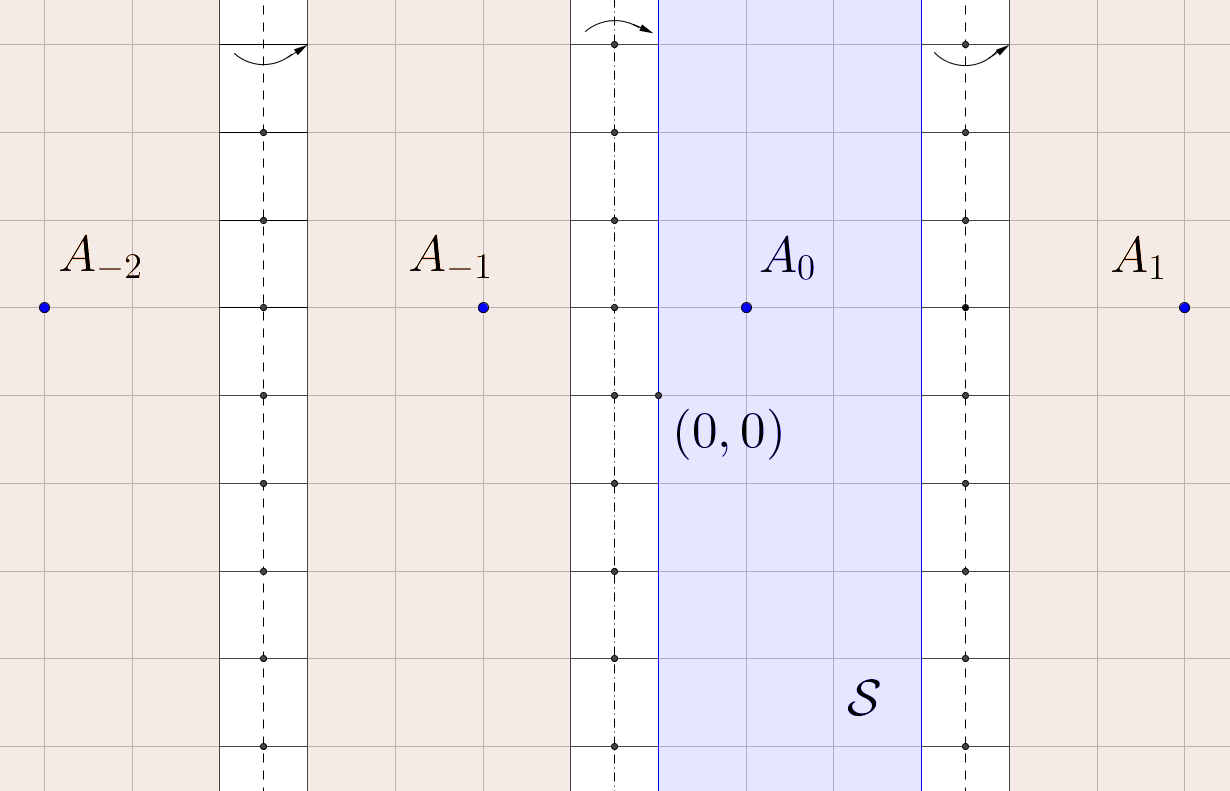}}
\caption{Following traditional origami notation, we are folding the strip and its translates in a mountain (dot dashed) and valley (dashed) fashion. The points $A_{-2},\,A_{-1},\,A_1$ are (a few of) the translates of $A_0$.}
\label{fig:strip}
\end{figure}
\end{center}
By comparing the random walk on the strip and the projection of the simple random walk onto the strip  under the above mentioned folding, one gets \eqref{eq:green_strip}.
\end{proof}
\begin{rem}[Transience on the strip]
This makes one understand that the Green's function is constant along hyperplanes of the form $\{x\in \Z^d:\,x_1=a\}$, $a=0,\,\ldots,\,L-1$. Note also that our formula combined with the estimate for the transient simple random walk (cf. \citet[Theorem~4.3.1]{LawlerLimic})
\begin{equation}
\max\left\{ 1,\,c_{\ell}\left\Vert x-y\right\Vert ^{2-d}\right\} \le g(x,\,y)\le\max\left\{ 1,\,c_{r}\left\Vert x-y\right\Vert ^{2-d}\right\} ,\quad c_{\ell,}\,c_{r}>0,\;x,\,y\in \Z^d\label{eq:lawler}
\end{equation} implies that the Green's function is finite on the diagonal in $d\ge 4$, that is, the random walk is transient on $\mathcal{S}_L$.
\end{rem}

\section{Green's function on the orthant}\label{sec:orthant_prel}

Let $\cO$ be the subgraph of the $d$-dimensional lattice with vertex set
\[
O:=\left\{ x\in\Z^{d}:\,\forall\,i=1,\dots,d:\,x_{i}\geq0\right\} =\N_0^{d}
\]
and nearest-neighbor bonds. This graph is also known with the name of discrete orthant (called ``octant'' in $d=3$). We set the bonds of $\cO$ to have $c_\cO \equiv 1$.

For $d\geq 3$, the Green's function of a random walk $(S_{n})_{n\ge0}$ on $\cO$
is given by
\[
G_{\cO}(x,\,y):=\frac{1}{\pi_{\cO}(y)}\ER_{x}\left[\sum_{n\geq0}\one{S_{n}=y}\right],\quad x,y\in\cO,
\]
where $\pi_\cO(x) := \sum_{y\sim x} c_\cO (x,y)$ as usual.

We wish to prove a closed formula for the Green's
function not only for the orthant, but also for more general subgraphs of the lattice
in which $m$ components are non-negative. We denote by $\mathcal{U}_m$ the graph with vertex set $\N_0^m\times \Z^{d-m}$ and with nearest-neighbor unitary conductances. We call their Green's function $G_m$ in place of $G_{\mathcal{U}_m}$ to ease the notation.  Also notice that $G_0(\cdot,\,\cdot) \equiv g(\cdot,\,\cdot)$ and $G_d(\cdot,\,\cdot)\equiv G_\cO(\cdot,\,\cdot)$.
\begin{prop}[Green's function on the orthant]\label{lem:octo_green} For all $x,\,y \in \mathcal{U}_m$
\begin{equation}\label{eq:green_mhyper}
G_{m}(x,\,y)=\sum_{v\in\left\{ 0,\,1\right\} ^{m}\times\left\{ 0\right\} ^{d-m}}g\left(x,\,\left((-1)^{v_{i}}\left(y_{i}+\nicefrac{1}{2}\right)-\nicefrac{1}{2}\right)_{i=1}^{d}\right).
\end{equation}
In particular, for all $x,\,y\in\mathcal{\cO},$
\eq{}\label{eq:green_mhyperO}
G_{\cO}(x,\,y)=\sum_{v\in\left\{ 0,\,1\right\} ^{d}}g\left(x,\,\left((-1)^{v_{i}}\left(y_{i}+\nicefrac{1}{2}\right)-\nicefrac{1}{2}\right)_{i=1}^{d}\right).
\eeq{}
\end{prop}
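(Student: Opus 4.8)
The plan is to iterate the reflection/folding argument of Proposition~\ref{lem:half_green}, applying it one coordinate at a time. The key structural observation is that $\mathcal{U}_m$ is obtained from $\mathcal{U}_{m-1}$ by imposing the constraint $x_m \geq 0$, which is exactly the half-space constraint that Proposition~\ref{lem:half_green} handles, only now applied in the $m$-th coordinate direction rather than the first. So the natural strategy is an induction on $m$, with $G_0 \equiv g$ as the base case and the inductive step supplied by the half-space folding performed along the hyperplane $\{x_m = -\nicefrac{1}{2}\}$.

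First I would set up the inductive step carefully. Assume \eqref{eq:green_mhyper} holds for $G_{m-1}$ on $\mathcal{U}_{m-1}$. To pass to $G_m$, I would rerun the folding construction of Proposition~\ref{lem:half_green}, but with $\mathcal{U}_{m-1}$ playing the role that $\Z^d$ played there and the $m$-th coordinate playing the role of the first. Concretely, one adds the extra half-distance bonds along $\{x_m = 0\}$, splits the conductances of $\mathcal{U}_{m-1}$ across the fold line $\{x_m = -\nicefrac{1}{2}\}$ into series pairs of value $2$, and identifies the walk on $\mathcal{U}_m$ (with its doubled boundary graph $\mathcal{U}_m'$) with the reflection $W'$ of the walk $W$ on the split graph. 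The same electric-network equivalence and the same distributional identity $W' \stackrel{d}{=} L$ then give, exactly as in the half-space proof,
\begin{equation*}
G_m(x,\,y) = G_{m-1}(x,\,y) + G_{m-1}\!\left(x,\,\overline{y}^{(m)} - \mathbf{e}_{(m)}\right),
\end{equation*}
where $\overline{y}^{(m)}$ denotes the reflection of $y$ that flips only the $m$-th coordinate, i.e. $\overline{y}^{(m)} = ((-1)^{\delta_{im}}(y_i + \nicefrac{1}{2}) - \nicefrac{1}{2})_{i=1}^d$. The reason the base-graph folding goes through unchanged is that Proposition~\ref{lem:half_green} never used anything about $\Z^d$ beyond reversibility, transience, and the nearest-neighbour structure transverse to the fold, all of which $\mathcal{U}_{m-1}$ still enjoys.

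Next I would carry out the combinatorial bookkeeping of the recursion. The point $\overline{y}^{(m)} - \mathbf{e}_{(m)}$ is precisely the image of $y$ under sending $y_m \mapsto (-1)^1(y_m + \nicefrac12) - \nicefrac12$ while leaving the other coordinates fixed. Thus the recursion $G_m(x,y) = G_{m-1}(x, y^{[v_m=0]}) + G_{m-1}(x, y^{[v_m=1]})$ splits a sum over $v \in \{0,1\}^{m-1}\times\{0\}^{d-m+1}$ into a sum over $v \in \{0,1\}^m \times \{0\}^{d-m}$, with the new bit $v_m$ recording whether the $m$-th coordinate has been reflected. Unwinding the induction, each $v\in\{0,1\}^m\times\{0\}^{d-m}$ contributes the single term $g(x,\,((-1)^{v_i}(y_i+\nicefrac12)-\nicefrac12)_{i=1}^d)$, which is exactly \eqref{eq:green_mhyper}; specializing to $m=d$ yields \eqref{eq:green_mhyperO}. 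One should check that the $\pi$-normalizations telescope correctly, i.e. that $\pi_{\mathcal{U}_m}(y)$ agrees with the normalization inherited from each folding — this is the analogue of the $\pi_{\cH'}\equiv\pi_{\mathcal{Q}}$ step and follows because the fold never alters the degree of an interior vertex $y\in\mathcal{U}_m$.

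The main obstacle I anticipate is verifying that the one-coordinate folding remains a legitimate electric-network reduction when the ambient graph is $\mathcal{U}_{m-1}$ rather than the full lattice. In the half-space proof the splitting of conductances across $\{x_1=-\nicefrac12\}$ relied on the bonds crossing that hyperplane being ordinary value-one edges of $\Z^d$; but after earlier folds the graph $\mathcal{U}_{m-1}$ already carries modified (value-$2$ and value-$\nicefrac12$) conductances on its own boundary hyperplanes. I would need to argue that the fold in the $m$-th direction interacts cleanly with these pre-existing boundary conductances — that the hyperplanes $\{x_i=0\}$ for $i<m$ and $\{x_m=-\nicefrac12\}$ are ``transverse'' so that the series-reduction and reflection arguments decouple coordinate by coordinate. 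The cleanest way to discharge this is to observe that all the folds are along mutually orthogonal hyperplanes, so the reflections generate a group action (a product of $\Z_2$'s acting by coordinate reflections) and the walk on $\mathcal{U}_m$ is the projection of the walk on the fully unfolded space under the orbit map; the Green's function identity then reads off as a sum over the orbit $\{((-1)^{v_i}(y_i+\nicefrac12)-\nicefrac12)_i : v\in\{0,1\}^m\times\{0\}^{d-m}\}$ of $y$. Making this group-theoretic ``method of images'' description precise, and confirming it agrees term-by-term with the iterated recursion, is where the real care is required.
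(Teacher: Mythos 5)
Your proof is correct, and it rests on the same underlying idea as the paper's (folding/method of images plus electric-network reduction), but it organizes the argument differently: you induct on $m$, performing one fold at a time with $\mathcal{U}_{m-1}$ as the ambient graph, whereas the paper runs the construction of Proposition~\ref{lem:half_green} once, simultaneously: it adds the comb-teeth on all $m$ faces of $\mathcal{U}_m$ at once, splits the conductances across all $m$ hyperplanes $\{x_j=-\nicefrac{1}{2}\}$ at once, and identifies the walk $L$ on $\mathcal{U}_m'$ with the single multi-coordinate reflection $Y$ of the walk on the modified lattice, which directly produces the $2^m$-term sum as the orbit of $y$ under the group of coordinate reflections. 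Your route buys a cleaner verification burden: each inductive step is literally the two-term identity of Proposition~\ref{lem:half_green}, and the only new thing to check is that $\mathcal{U}_{m-1}$ is invariant under the reflection $x_m\mapsto -x_m-1$ (true, since the defining constraints of $\mathcal{U}_{m-1}$ involve only coordinates $1,\dots,m-1$) and that the quotient is $\mathcal{U}_m'$; the price is the combinatorial unwinding of the recursion over $v\in\{0,1\}^m\times\{0\}^{d-m}$, which you carry out correctly. The paper's simultaneous version avoids the unwinding but requires checking the distributional identity $Y\stackrel{d}{=}L$ for the $m$-fold reflection in one go.

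Two small points. First, the ``main obstacle'' you anticipate is not actually there: $\mathcal{U}_{m-1}$ is by definition the graph with \emph{unit} conductances on $\N_0^{m-1}\times\Z^{d-m+1}$; the value-$2$ conductances (there are never value-$\nicefrac{1}{2}$ ones in this construction) live only inside the proof of the inductive hypothesis and do not persist into the object $G_{m-1}$ that the induction hands you, so the $m$-th fold is applied to a plain unit-conductance graph and no transversality issue arises. Second, your parenthetical definition $\overline{y}^{(m)}=((-1)^{\delta_{im}}(y_i+\nicefrac{1}{2})-\nicefrac{1}{2})_{i=1}^d$ already incorporates the shift by $-\mathbf{e}_{(m)}$, so writing $\overline{y}^{(m)}-\mathbf{e}_{(m)}$ on top of it would send $y_m$ to $-y_m-2$; by analogy with the paper's $\overline{y}$ you should define $\overline{y}^{(m)}$ as the bare sign flip of the $m$-th coordinate, and then $\overline{y}^{(m)}-\mathbf{e}_{(m)}$ is the intended point with $m$-th coordinate $-y_m-1$, consistent with what you use in the bookkeeping that follows.
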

\begin{figure}
\begin{center}\includegraphics[width=0.3\textwidth]{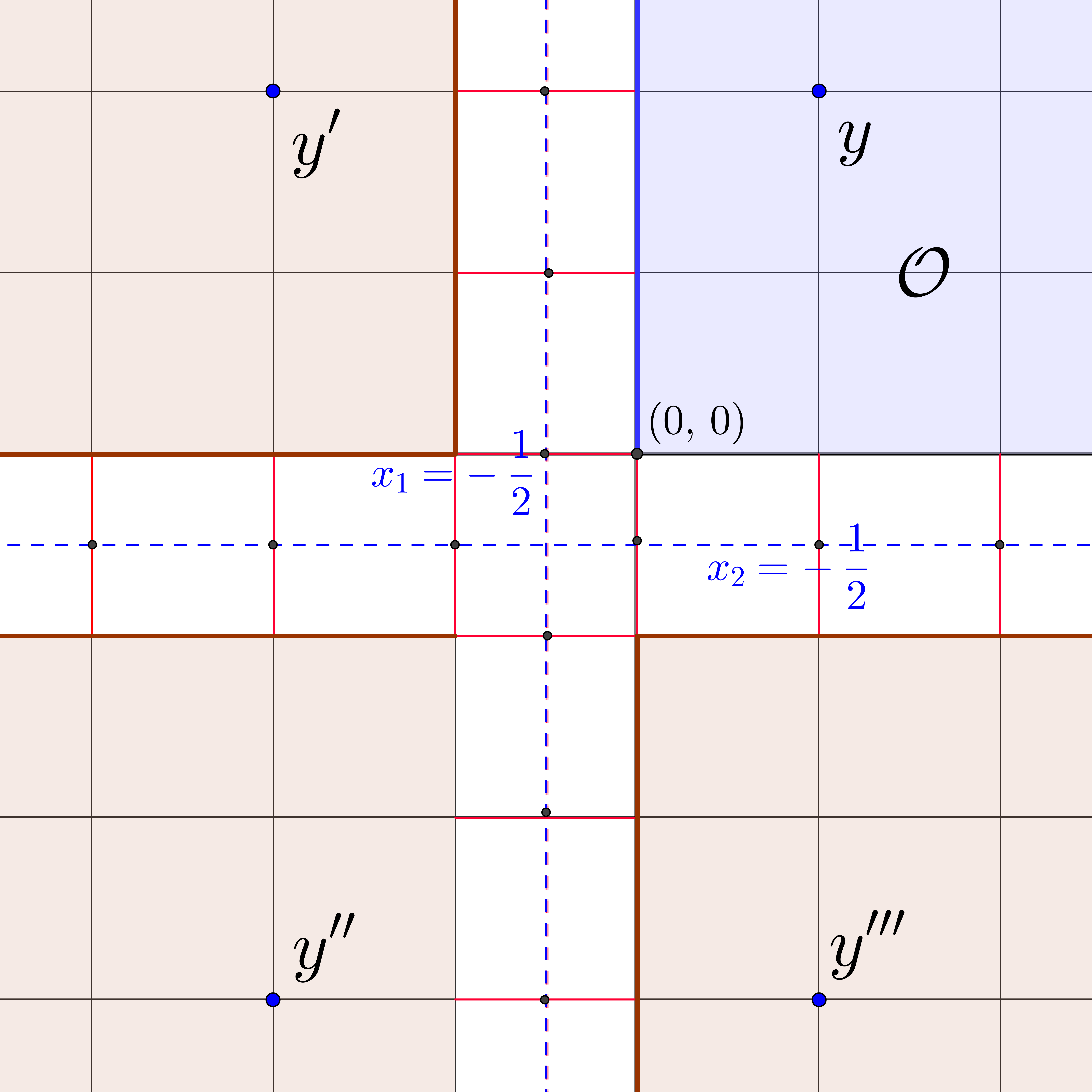}
\end{center}
\caption{For $y\in \cO$, $\left\{\left((-1)^{v_{i}}\left(y_{i}+\nicefrac{1}{2}\right)-\nicefrac{1}{2}\right)_{i=1}^{2}:\,v\in\left\{ 0,\,1\right\}^{2}\right\}=\{y,\,y',\,y'',\,y'''\}$, in this two-dimensional example.}
\label{fig:Reflections}
\end{figure}
\begin{proof} Before we begin, we want to stress that the apparently complicated formulas \eqref{eq:green_mhyper} and \eqref{eq:green_mhyperO} are nothing but a sum over all the reflections of the point $y$ about $m$ axes of the form $\left\{ x\in\R^{d}:\,x_{j}=-\nicefrac{1}{2}\right\} $
for some $1\leq j\leq d$.  Figure~\ref{fig:Reflections} clarifies this in the case $m=d$.

The proof is similar to that of Proposition \ref{lem:half_green} so we
will only sketch it here. The notation we adopt is also similar to
stress we are essentially going over the same argumentation. Since
the orthant is a special case of intersections of $d$ half spaces, we will work directly for a subspace $\mathcal{U}_m$ and $m\geq 1$, being $\mathcal{U}_0 = \Z^d$ trivial.

To $\mathcal{U}_m$, we add all the bonds of length $\nicefrac{1}{2}$ that connect the ``face'' $\mathcal{F}_{j}:=\left\{ x\in \mathcal{U}_m :\,x_{j}=0\right\}$ to the shifted ``face'' $\mathcal{F}_{j}-\nicefrac{1}{2}\mathbf{e}_{(j)}$
for all $1\leq j\leq m$ and we put on each newly added edge a conductance
equal to $2$. Call this new graph $\mathcal{U}_m'$ and its Green's function $G'_m$. Clearly $G'_m(x,y)=G_m(x,y)$ for all $x,y\in \mathcal{U}_m$. Denote by $L$ be the random walk on $\mathcal{U}_m'$
driven by such conductances.

At this point we modify the discrete lattice in a similar way as in Proposition \ref{lem:half_green}. Essentially for all $1\leq j\leq m$ we replace each conductance which connects the hyperplanes $\mathcal{I}_j:=\{x\in \Z^d\,:\, x_j = 0\}$ and $\mathcal{I}_j-\mathbf{e}_{(j)}$ by two conductances in series and value two (these are the red bonds in Figure~\ref{fig:Reflections}). These new conductances have length $\nicefrac{1}{2}$ and connect  $\mathcal{I}_j-\nicefrac{1}{2}\mathbf{e}_{(j)}$ to either $\mathcal{I}_j$ or $\mathcal{I}_j-\mathbf{e}_{(j)}$ for some $1\leq j\leq m$. Call this new graph $\mathcal{Q}$.

Let $X=(X_{n})_{n\ge0}=( X^{(1)}_{n},\,\ldots,\, X_{n}^{(d)})_{n\ge0}$
be the random walk on $\mathcal{Q}$ starting in $\mathcal{U}_m$ and $G_\mathcal{Q}(\cdot,\,\cdot)$ its Green's function. Let $Y$ be the reflection of
$X$ on the hyperplanes given by $\left\{ x\in\R^{d}:\,x_{j}=-\nicefrac{1}{2}\,\mathbf{e}_{(j)}\right\}$, $1\leq j\leq m$, that is, $Y_{0} = X_{0}$ and $Y_{n}=(Y_{n}^{(1)},\,\ldots,\, Y_{n}^{(d)})$
with
\[
Y_{n}^{(k)}:=\begin{cases}
(-X_{n}^{(k)}-1)\one{X^{(k)}_{n}\le-\nicefrac{1}{2}}+X_{n}^{(k)}\,\one{X_{n}^{(k)}>-\nicefrac{1}{2}} & 1 \leq k \leq m,  \\
 X_{n}^{(k)} & \mbox{otherwise}
\end{cases}.
\]
We can
now use the fact that $G_m \equiv G'_m$ on $\mathcal{U}_m\times \mathcal{U}_m$, that $G_\mathcal{Q}\equiv g$ on $\Z^d\times \Z^d$ and the equivalence of the laws of the random walks $L$ and
$Y$ to show that for all $x,y\in \mathcal{U}_m$
\begin{align}
  G_m(x,\,y) &=\sum_{v\in\left\{ 0,\,1\right\} ^{m}\times\left\{ 0\right\} ^{d-m}}g\left(x,\,\left((-1)^{v_{i}}\left(y_{i}+\nicefrac{1}{2}\right)-\nicefrac{1}{2}\right)_{i=1}^{d}\right).\label{eq:formula_one}
\end{align}

\end{proof}

We are interested now in monotonicity properties of Green's functions. We could not find in the literature a reference to the next Lemma, so we decided to give a short proof for it. Let $x,\,y\in \Z^d$ and define the partial relation $x\succeq y$ if and only if $|x_i|\geq|y_i|$ for all $1 \leq i \leq d$. This is also known as product order.
\begin{lem}[Monotonicity of $g(0,\,\cdot)$ with respect to the product order]
\label{lem:product_order}If $x,\,y\in\Z^d$ and $x\succeq y$, then
$g(0,\,x)\le g(0,\,y)$.\end{lem}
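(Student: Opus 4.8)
The plan is to reduce the product-order inequality to a single elementary monotonicity step and then to prove that step by a discrete maximum principle that exploits an antisymmetry. The simple random walk on $\Z^{d}$ is invariant under permutations of the coordinate axes and under the sign flip of any single coordinate, so its Green's function satisfies $g(0,w)=g\big(0,(|w_{1}|,\dots,|w_{d}|)\big)$. Hence $g(0,x)=g(0,|x|)$ and $g(0,y)=g(0,|y|)$ with $|x|,|y|\in\N_{0}^{d}$, and the relation $x\succeq y$ becomes $|x|\ge|y|$ coordinatewise. One can pass from $|y|$ to $|x|$ by increasing one coordinate by one at a time, always starting from a non-negative value, so it suffices to prove the consecutive bound
\begin{equation*}
g(0,z)\ \ge\ g(0,z+\mathbf e_{(i)}),\qquad z_{i}\ge0. \tag{$\star$}
\end{equation*}

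To attack $(\star)$ I would fix $i$ and set $D(z):=g(0,z)-g(0,z+\mathbf e_{(i)})$. Let $R$ be the reflection of $\Z^{d}$ across the hyperplane $\{x_{i}=-\tfrac12\}$, i.e. $(Rz)_{i}=-1-z_{i}$ and $(Rz)_{j}=z_{j}$ for $j\ne i$; note that $R$ swaps $0$ and $-\mathbf e_{(i)}$. Using the invariance of $g(0,\cdot)$ under the sign flip of the $i$-th coordinate one checks that $g(0,Rz)=g(0,z+\mathbf e_{(i)})$ and $g(0,Rz+\mathbf e_{(i)})=g(0,z)$, whence the key antisymmetry $D(Rz)=-D(z)$. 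As a function of its argument $D$ is harmonic for the walk on $\Z^{d}\setminus\{0,-\mathbf e_{(i)}\}$, it tends to $0$ at infinity (transience, $d\ge3$), and $D(0)=g(0,0)-g(0,\mathbf e_{(i)})>0$, again because the walk is transient. Proving $(\star)$ then amounts to showing $D\ge0$ on $\Lambda:=\{z\in\Z^{d}:z_{i}\ge0\}$.

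For this I would argue by contradiction. Suppose $m:=\inf_{\Lambda}D<0$. Since $D\to0$ at infinity the infimum is attained on a finite set; among the minimizers pick $z^{\ast}$ with the smallest value of $z^{\ast}_{i}$. Because $D(0)>0$ we have $z^{\ast}\ne0$, and since $-\mathbf e_{(i)}\notin\Lambda$, $D$ is harmonic at $z^{\ast}$. If $z^{\ast}_{i}\ge1$, then $z^{\ast}-\mathbf e_{(i)}\in\Lambda$ and the mean-value identity together with $D\ge m$ on all neighbours forces $D(z^{\ast}-\mathbf e_{(i)})=m$, contradicting the minimal choice of $z^{\ast}_{i}$. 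Hence $z^{\ast}_{i}=0$. Now exactly one neighbour, $z^{\ast}-\mathbf e_{(i)}$, leaves $\Lambda$, and since $Rz^{\ast}=z^{\ast}-\mathbf e_{(i)}$ the antisymmetry gives $D(z^{\ast}-\mathbf e_{(i)})=-m$, while the remaining $2d-1$ neighbours lie in $\Lambda$ and satisfy $D\ge m$. Therefore
\[
m=D(z^{\ast})=\frac{1}{2d}\sum_{y\sim z^{\ast}}D(y)\ \ge\ \frac{1}{2d}\big[(-m)+(2d-1)m\big]=\frac{d-1}{d}\,m,
\]
which yields $m\ge0$, a contradiction. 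Thus $D\ge0$ on $\Lambda$, i.e. $(\star)$ holds, and chaining the one-step bounds proves the Lemma.

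I expect the genuinely delicate point to be precisely the choice of argument in this last step. The naive approach---reflect $g(0,\cdot)$ across the perpendicular bisector of the bond $\{0,-\mathbf e_{(i)}\}$ and invoke the maximum principle on a half-space---is circular, because the bisector $\{x_{i}=-\tfrac12\}$ is a half-integer hyperplane containing no lattice points, so the ``boundary data'' fed to the maximum principle are themselves the quantity to be estimated (equivalently, an increment-reflection coupling only ever reflects across integer hyperplanes and so compares levels differing by two, never adjacent ones). The resolution is to run the maximum principle at a hypothetical negative minimizer and to use the antisymmetry $D\circ R=-D$ to pin the value of the single exterior neighbour to $-m>0$; this is exactly what makes the averaging inequality strict enough to force $m\ge0$. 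The two auxiliary inputs, $D(0)>0$ and $D\to0$ at infinity, are what transience in $d\ge3$ supplies and are needed to locate and to attain the infimum.
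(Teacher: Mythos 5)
Your proof is correct, but it takes a genuinely different route from the paper. The paper's argument is a two-line appeal to special functions: it invokes Montroll's integral representation $2d\,g(0,x)=\int_{0}^{+\infty}\e^{-t}\prod_{i=1}^{d}I_{x_{i}}(t/d)\,\De t$ together with the facts that $I_{-m}=I_{m}$ and that $I_{j}(t)$ is nonincreasing in the order $j\in\N_{0}$, so the product order on $(|x_{i}|)_{i}$ transfers immediately to the integrand. Your argument instead stays entirely within discrete potential theory: after the same symmetry reduction to $\N_{0}^{d}$, you reduce to the one-step bound $g(0,z)\ge g(0,z+\mathbf{e}_{(i)})$ for $z_{i}\ge0$ and establish it by a maximum principle for $D(z)=g(0,z)-g(0,z+\mathbf{e}_{(i)})$ on the half-space $\{z_{i}\ge0\}$, using the antisymmetry $D\circ R=-D$ under reflection across $\{x_{i}=-\nicefrac{1}{2}\}$ to pin the value at the single exterior neighbour of a boundary minimizer; the resulting averaging inequality $m\ge\frac{d-1}{d}m$ forces $m\ge0$. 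I checked the details (harmonicity of $D$ off $\{0,-\mathbf{e}_{(i)}\}$, attainment of a negative infimum via decay at infinity, positivity of $D(0)$ by transience, and the two cases $z^{\ast}_{i}\ge1$ versus $z^{\ast}_{i}=0$) and they all hold. The trade-off: the paper's proof is far shorter and dovetails with its later computation of $G_{m}(0,0)$ via the same Bessel representation, whereas yours is longer but self-contained, avoids special-function identities entirely, and is more robust --- it uses only the coordinate sign-flip symmetry, reflection invariance, and transience, so it would survive in settings where no closed integral formula is available. It is also pleasantly consonant with the folding/reflection theme of the rest of the paper.
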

\begin{proof}
We have, from \citet[Eq. (2.10)]{Montroll}, that \[
2d g(0,\,x)=\int_{0}^{+\infty}\e^{-t}\prod_{i=1}^{d}I_{x_{i}}\left(\frac{t}{d}\right)\De t.\]
For $j,\,j'\in\N_{0},$ one has $I_{j}(t)\ge I_{j'}(t)$ for all $t\in[0,\,+\infty)$
if $j'\ge j$ . Considering also that $I_{-m}=I_{m}$ for $m\in\Z$
\citep[Eq. 9.6.6]{AbrSte}, the product order yields the desired conclusion.
\end{proof}
\begin{cor}
$G_m(x,\,\cdot)$ is monotone decreasing with respect to the product order for all $x\in \mathcal U_m$.
\end{cor}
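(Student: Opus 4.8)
The plan is to read the monotonicity off directly from the reflection formula \eqref{eq:green_mhyper}. For $v\in\{0,1\}^m\times\{0\}^{d-m}$ write the reflected target $R_v(y):=\left((-1)^{v_i}(y_i+\nicefrac{1}{2})-\nicefrac{1}{2}\right)_{i=1}^d$, so that $R_v(y)_i=y_i$ when $v_i=0$ and $R_v(y)_i=-y_i-1$ when $v_i=1$; then \eqref{eq:green_mhyper} reads $G_m(x,y)=\sum_v g(x,R_v(y))$. The idea is to compare the sum term by term: if each summand $g(x,R_v(\cdot))$ is monotone decreasing for the product order, then summing over $v$ gives the claim. The natural engine for the termwise comparison is Lemma~\ref{lem:product_order}.

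The key step is a coordinatewise bookkeeping on the reflected targets. Fix $y\succeq y'$ in $\mathcal{U}_m$, so $|y_i|\ge|y'_i|$ for every $i$, and note that for a folded coordinate $i\le m$ this means $y_i\ge y'_i\ge 0$. For each fixed $v$ I would check that $R_v(y)\succeq R_v(y')$: for $i\le m$ one has $|R_v(y)_i|\in\{y_i,\,y_i+1\}$ and $|R_v(y')_i|\in\{y'_i,\,y'_i+1\}$ with the same choice dictated by $v_i$, so $|R_v(y)_i|\ge|R_v(y')_i|$; for $i>m$ one simply has $R_v(y)_i=y_i$ with $|y_i|\ge|y'_i|$. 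Hence $R_v(y)\succeq R_v(y')$, and Lemma~\ref{lem:product_order} gives $g(0,R_v(y))\le g(0,R_v(y'))$ for every $v$. Summing over $v$ and recalling \eqref{eq:green_mhyper} then yields $G_m(x,y)\le G_m(x,y')$ at the base point $x=0$, which is the case where the argument closes cleanly.

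The main obstacle is precisely the dependence on the base point $x$: Lemma~\ref{lem:product_order} is a statement about $g(0,\cdot)$, so to run the argument for a general $x\in\mathcal{U}_m$ I must first center the walk by translation invariance, $g(x,R_v(y))=g(0,R_v(y)-x)$, and then apply the lemma to the shifted targets $R_v(y)-x$. The difficulty is that the product order need not survive the translation by $-x$, since the inequality $|y_i-x_i|\ge|y'_i-x_i|$ can fail when $y_i$ and $y'_i$ lie on the same side of $x_i$; a naive termwise comparison is therefore insufficient once $x\neq 0$. This is the step I would scrutinise most carefully: I expect to be forced to group the $v_i=0$ and $v_i=1$ summands in reflected pairs and to exploit the convexity of $\nu\mapsto I_\nu(t)$ (equivalently the Tur\'an-type inequality $I_\nu^2\le I_{\nu-1}I_{\nu+1}$) through the integral representation $2d\,g(0,z)=\int_0^{+\infty}\e^{-t}\prod_i I_{z_i}(t/d)\,\De t$ already invoked in Lemma~\ref{lem:product_order}, and I anticipate that this basepoint analysis, rather than the coordinatewise bookkeeping, is where the genuine work lies.
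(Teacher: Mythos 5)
Your strategy is exactly the paper's: the printed proof of this corollary is the single sentence ``the result follows combining Proposition~\ref{lem:octo_green} with Lemma~\ref{lem:product_order}'', i.e.\ precisely the termwise comparison you carry out. Your coordinatewise bookkeeping at the base point $x=0$ is correct and complete: for $y\succeq y'$ in $\mathcal U_m$ one has $R_v(y)\succeq R_v(y')$ for every $v$ (using $y_i\ge y_i'\ge 0$ in the folded coordinates), Lemma~\ref{lem:product_order} applies to each summand, and summing over $v$ gives $G_m(0,y)\le G_m(0,y')$.

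Your unease about a general base point $x$ is, however, more than a technical obstacle, and you should not try to repair it with Tur\'an-type inequalities: read literally, the statement is false for $x\ne 0$. Take $d=3$, $m=1$, $x=y=(5,0,0)$ and $y'=(0,0,0)$, so that $y\succeq y'$ with both points in $\mathcal U_1$. Then \eqref{eq:green_mhyper} gives
\[
G_1(x,y)=g(0,0)+g\bigl(0,(11,0,0)\bigr)>g\bigl(0,(5,0,0)\bigr)+g\bigl(0,(6,0,0)\bigr)=G_1(x,y'),
\]
since $g(0,0)$ alone already exceeds the right-hand side ($g(0,\cdot)$ attains its strict maximum at the origin and decays like $\|z\|^{2-d}$). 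The reason is the one you identified: Lemma~\ref{lem:product_order} concerns $g(0,\cdot)$, the product order is centred at the origin, and the translation by $-x$ needed to apply the lemma destroys it; no regrouping of the $2^m$ reflected summands can recover a monotonicity that the sum itself does not possess. The corollary is true, and your argument proves it, for $x=0$; that is also all the paper subsequently uses — note that \eqref{eq:sup_octo} compares $G_m(x,x)$ with $G_m(0,0)$ by applying Lemma~\ref{lem:product_order} to the differences $R_v(x)-x$, not by invoking monotonicity of $G_m(x,\cdot)$ for a fixed $x\ne 0$. So state and prove the result for $G_m(0,\cdot)$ and stop there; the basepoint analysis you planned to scrutinise cannot be made to work because the claim it would establish fails.
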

\begin{proof}
The result follows combining Proposition \ref{lem:octo_green} with Lemma~\ref{lem:product_order}.
\end{proof}

\begin{rem}
From \eqref{eq:formula_one} and Lemma \ref{lem:product_order} above one obtains the location of the maximum of the Green's function:
\begin{equation}
\sup_{x\in\mathcal{U}_m}G_{m}(x,\,x)=G_m(0,\,0)\label{eq:sup_octo},\quad 0\leq m \leq d.
\end{equation}
Another consequence of \eqref{eq:formula_one} is the following chain of strict inequalities:
\[
g(x,\,y)< G_1(x,\,y) < \ldots < G_d(x,\,y), \quad x,\,y\in \cO.
\]
More precisely $ G_j(x,\,y) < G_{j+1}(x,\,y)$ for all $x,y \in \mathcal{U}_{j+1}$ and $0\leq j\leq d-1$.
\end{rem}

\subsection{A useful formula\label{subsec:useful} at the origin} An interesting consequence of our analysis is that we can explicitly calculate (\ref{eq:formula_one}) in the case $x=y=0.$ Namely we show
\begin{lem}\label{lem:formula_G_int} Let $I_k(\cdot)$ be the modified Bessel function of the first kind of order $k\in \N_0$. For all $0\le m\le d$,
\begin{equation}
G_{m}(0,\,0)=\frac{1}{2d}\int_{0}^{+\infty}\e^{-x}\left(I_{1}\left(\frac{x}{d}\right)+I_{0}\left(\frac{x}{d}\right)\right)^{m}I_{0}\left(\frac{x}{d}\right)^{d-m}\De x.\label{eq:formula_G_int}
\end{equation}
\end{lem}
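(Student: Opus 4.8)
The plan is to specialize the reflection formula \eqref{eq:formula_one} to the diagonal point $x=y=0$ and then feed the resulting finite sum of lattice Green's functions into Montroll's integral representation already used in the proof of Lemma~\ref{lem:product_order}. First I would evaluate the argument $\left((-1)^{v_{i}}(y_{i}+\nicefrac12)-\nicefrac12\right)_{i=1}^{d}$ at $y=0$: for each index $i$ the $i$-th coordinate equals $0$ when $v_i=0$ and $-1$ when $v_i=1$. Writing $w(v)$ for this point, \eqref{eq:formula_one} becomes
\[
G_m(0,0)=\sum_{v\in\{0,1\}^m\times\{0\}^{d-m}} g\bigl(0,\,w(v)\bigr),\qquad w(v)_i=-v_i\ (i\le m),\quad w(v)_i=0\ (i>m).
\]

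Next I would apply the identity $2d\,g(0,x)=\int_0^{+\infty}\e^{-t}\prod_{i=1}^d I_{x_i}(t/d)\,\De t$ from \citet[Eq. (2.10)]{Montroll} to each summand. Since the coordinates of $w(v)$ lie in $\{0,-1\}$ and $I_{-1}=I_1$ (recorded via $I_{-m}=I_m$ in the proof of Lemma~\ref{lem:product_order}), the factor $I_{w(v)_i}$ equals $I_{v_i}$ for every $i$, and for $i>m$ it is $I_0$. As the sum is finite I may interchange it with the integral, obtaining
\[
2d\,G_m(0,0)=\int_0^{+\infty}\e^{-t}\left(\sum_{v\in\{0,1\}^m}\prod_{i=1}^m I_{v_i}(t/d)\right)I_0(t/d)^{d-m}\,\De t.
\]

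The only genuine step is the algebraic simplification of the inner sum. Recognizing that summing a product of independent factors over the cube $\{0,1\}^m$ factorizes as a product of sums, I have
\[
\sum_{v\in\{0,1\}^m}\prod_{i=1}^m I_{v_i}(t/d)=\prod_{i=1}^m\bigl(I_0(t/d)+I_1(t/d)\bigr)=\bigl(I_0(t/d)+I_1(t/d)\bigr)^m.
\]
Substituting this and dividing by $2d$ yields \eqref{eq:formula_G_int} (after renaming the dummy integration variable). I expect no serious obstacle here: the interchange of a finite sum with the integral is immediate, so the crux is merely the bookkeeping of the reflected coordinates together with the observation $I_{-1}=I_1$, after which the binomial-type factorization closes the argument.
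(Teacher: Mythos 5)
Your proposal is correct and follows essentially the same route as the paper: both specialize \eqref{eq:formula_one} to $x=y=0$ and feed the resulting reflected points into Montroll's Bessel-integral representation of $g$. The only cosmetic difference is that you factorize the sum over $\{0,1\}^m$ directly into $\left(I_0+I_1\right)^m$, whereas the paper first groups the terms by the number of reflections (using that $g(0,z)$ depends only on $\|z\|$) to get $\sum_{j}\binom{m}{j}g(0,\gamma_j)$ and then invokes the binomial theorem; the two computations are interchangeable.
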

\begin{proof}
Let $\gamma_{j}:=\sum_{k=1}^{j}\mathbf{e}_{(k)}$ for $1\le j\le d.$
The formula \eqref{eq:green_mhyper} is telling us that, to compute $G_m(0,0)$ we
have to choose, for each $j\in\{0,...,m\}$, $j$ hyperplanes out of $m$ about which to reflect the
point $0$, and then compute the sum of terms of the form $g(0,z)$, where $z$ is one reflection of the origin about these hyperplanes. However, the value of $g(0,z)$ is independent
of the $j$ hyperplanes chosen, due to the fact that $g(x,\,y)$ depends only on $\|x-y\|$. This yields
\begin{equation*}
G_{m}(0,\,0)=\sum_{j=0}^{m}\binom{m}{j}g\left(0,\,\gamma_{j}\right).\label{eq:formula_binomial}
\end{equation*}
As a consequence of \citet[Eq. (2.11b)]{Montroll} we obtain
\begin{equation*}
g\left(0,\,\gamma_{j}\right)=\frac{1}{2d}\int_{0}^{+\infty}\e^{-x}I_{1}\left(\frac{x}{d}\right)^{j}I_{0}\left(\frac{x}{d}\right)^{m-j}I_{0}\left(\frac{x}{d}\right)^{d-m}\De x,\quad j=0,\,\ldots,\,m
\end{equation*}
whence \eqref{eq:formula_G_int}.
\end{proof}

One can use the above formula as a starting point to show asymptotic expansions of $G_\cO$ for large values of $d$. Furthermore, it appears to be useful to get statements pointwise in the dimension. The corollary below provides a simple example.

\begin{cor} $2d G_{\cO}(0,\,0)$ is decreasing in $d$ for all $d\geq 3$.
\end{cor}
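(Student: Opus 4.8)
The plan is to feed Lemma~\ref{lem:formula_G_int} with $m=d$ into the problem, which gives
\[
2d\,G_{\cO}(0,0)=\int_{0}^{+\infty}\e^{-x}\left(I_{0}\!\left(\tfrac{x}{d}\right)+I_{1}\!\left(\tfrac{x}{d}\right)\right)^{d}\De x=:\Phi(d),
\]
and to read $\Phi$ as a function of a \emph{real} parameter $d>0$, noting that it agrees with $2d\,G_{\cO}(0,0)$ at integers and that the integral converges precisely for $d>2$ (using $I_{0}(t)+I_{1}(t)\sim\e^{t}/\sqrt{2\pi t}$ at infinity). Writing $h:=I_{0}+I_{1}$ and $t:=x/d$, differentiation in $d$ yields
\[
\Phi'(d)=\int_{0}^{+\infty}\e^{-x}\,h\!\left(\tfrac{x}{d}\right)^{d}\psi\!\left(\tfrac{x}{d}\right)\De x,\qquad \psi(t):=\log h(t)-t\,\frac{h'(t)}{h(t)},
\]
so that, since the factor $\e^{-x}h(x/d)^{d}$ is positive, the entire statement reduces to the pointwise bound $\psi(t)<0$ for $t>0$.

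First I would analyse $\psi$. Since $h(0)=1$ and $h'(0)=\tfrac12$, one has $\psi(0)=0$, while a direct computation from $\psi=\log h-t\,(\log h)'$ gives $\psi'(t)=-t\,(\log h)''(t)$. Hence it is enough to prove that $h=I_{0}+I_{1}$ is strictly log-convex on $(0,\infty)$: this forces $\psi'<0$ there, so $\psi$ is strictly decreasing and $\psi(t)<\psi(0)=0$ for every $t>0$, which is exactly what is needed.

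The log-convexity is where the content sits. From the representation $I_{n}(t)=\frac{1}{\pi}\int_{0}^{\pi}\e^{t\cos\theta}\cos(n\theta)\,\De\theta$ one obtains
\[
h(t)=\frac{1}{\pi}\int_{0}^{\pi}\e^{t\cos\theta}\,(1+\cos\theta)\,\De\theta,
\]
and, because the weight $1+\cos\theta$ is nonnegative, pushing $\frac{1}{\pi}(1+\cos\theta)\,\De\theta$ forward under $\theta\mapsto\cos\theta$ produces a nonnegative, non-degenerate measure $\mu$ on $[-1,1]$ with $h(t)=\int_{-1}^{1}\e^{ts}\,\De\mu(s)$. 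Laplace transforms of such measures are strictly log-convex: writing $h^{(k)}(t)=\int s^{k}\e^{ts}\,\De\mu$ and applying Cauchy--Schwarz to $\e^{ts/2}\De\mu$ against $s\,\e^{ts/2}\De\mu$ gives $h''h-(h')^{2}>0$, so $(\log h)''>0$ on $(0,\infty)$.

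The only remaining point is routine: justifying the differentiation under the integral sign, for which the asymptotics of $I_{0},I_{1}$ near $0$ and at $+\infty$ (the integrand decays like $x^{-d/2}$, and $\psi(x/d)$ grows only logarithmically) supply an integrable dominating function on compact $d$-intervals with $d\ge 3$. I expect the genuine obstacle to be the strict log-convexity of $I_{0}+I_{1}$; once the integral representation with a nonnegative weight is identified, the Cauchy--Schwarz argument closes it at once, and $\Phi'(d)<0$ follows for all $d\ge 3$.
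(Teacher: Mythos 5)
Your proof is correct, and it reaches the conclusion by a genuinely different mechanism than the paper, even though both arguments pivot on the same integral representation $I_{0}(t)+I_{1}(t)=\pi^{-1}\int_{0}^{\pi}\e^{t\cos\theta}(1+\cos\theta)\,\De\theta$ from Abramowitz--Stegun 9.6.19. The paper stays at the level of integer $d\le d'$ and compares the integrands directly: since $\pi^{-1}(1+\cos\theta)\De\theta$ has mass one, Jensen's inequality (equivalently, monotonicity of the power means $r\mapsto(\int f^{1/r}\De\mu)^{r}$) gives $\bigl(I_{0}(\tfrac{x}{d})+I_{1}(\tfrac{x}{d})\bigr)^{d}\ge\bigl(I_{0}(\tfrac{x}{d'})+I_{1}(\tfrac{x}{d'})\bigr)^{d'}$ pointwise in $x$, and the claim follows in two lines with no regularity issues. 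You instead promote $d$ to a continuous parameter, differentiate under the integral sign, and reduce everything to $\psi(t)=\log h(t)-t(\log h)'(t)<0$, which you obtain from $\psi(0)=0$, $\psi'(t)=-t(\log h)''(t)$, and the strict log-convexity of $h=I_{0}+I_{1}$ via Cauchy--Schwarz applied to the Laplace-transform representation $h(t)=\int_{-1}^{1}\e^{ts}\De\mu(s)$. The two arguments are two faces of the same inequality --- log-convexity of $t\mapsto\int\e^{ts}\De\mu(s)$ is precisely what drives the power-mean monotonicity --- but your infinitesimal version costs you the (routine, and correctly sketched) justification of differentiating under the integral and of convergence for real $d>2$, while buying a slightly stronger statement: strict monotonicity of $\Phi$ on the whole real interval $(2,\infty)$, whereas the paper's displayed inequality as written is only non-strict. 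Your identification of the strict log-convexity of $I_{0}+I_{1}$ as the crux is accurate, and the Cauchy--Schwarz step closes it as you say.
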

\begin{proof}
This is an immediate consequence of Lemma~\ref{lem:formula_G_int}. Indeed for $d'\ge d$, \citet[Eq. 9.6.19]{AbrSte} gives that
\begin{align*}
\left(I_{0}\left(\frac{x}{d}\right)+I_{1}\left(\frac{x}{d}\right)\right)^{d}= & \left(\int_{0}^{\pi}\e^{\frac{x}{d}\cos\theta}\frac{\left(\cos\theta+1\right)}{\pi}\De\theta\right)^{d}\\
 & \ge\left(\int_{0}^{\pi}\e^{\frac{x}{d'}\cos\theta}\frac{\left(\cos\theta+1\right)}{\pi}\De\theta\right)^{d'}=\left(I_{0}\left(\frac{x}{d'}\right)+I_{1}\left(\frac{x}{d'}\right)\right)^{d'}
\end{align*}
where the second line follows from Jensen's inequality and the fact
that the measure $\pi^{-1}\left(\cos\theta+1\right)\De\theta$ has
mass $1$. Plugging this into \eqref{eq:formula_G_int} with $m=d$, we can conclude.
\end{proof}
\bibliographystyle{abbrvnat}
\bibliography{literatur}

\end{document}